\documentclass[12pt,reqno]{amsart}
\usepackage{amsmath,amssymb,amsthm,mathtools}
\usepackage[margin=1in]{geometry}
\usepackage{microtype,booktabs,array}
\usepackage{xcolor}
\usepackage[numbers,sort&compress]{natbib}
\usepackage[colorlinks=true,linkcolor=blue!50!black,
citecolor=blue!50!black,urlcolor=blue!50!black]{hyperref}
\hypersetup{pdftitle={An improved bound on the support diameter of nonnegative planar Euler vorticity}}
\allowdisplaybreaks
\numberwithin{equation}{section}

\newtheorem{theorem}{Theorem}[section]
\newtheorem{corollary}[theorem]{Corollary}
\newtheorem{proposition}[theorem]{Proposition}
\newtheorem{lemma}[theorem]{Lemma}

\theoremstyle{definition}

\newtheorem{remark}[theorem]{Remark}

\begin{document}

\title[On the support diameter of planar Euler vorticity]
{An improved bound on the support diameter of nonnegative planar Euler vorticity}

\author{Daomin Cao}
\address{State Key Laboratory of Mathematical Sciences, Academy of Mathematics and Systems Science, Chinese Academy of Sciences, Beijing 100190, P.R. China; and University of Chinese Academy of Sciences, Beijing 100049, P.R. China}
\email{dmcao@amt.ac.cn}

\author{Junhong Fan}
\address{Institute of Applied Mathematics, AMSS, Chinese Academy of Sciences, Beijing 100190; and University of Chinese Academy of Sciences, Beijing 100049, P.R. China}
\email{fanjunhong@amss.ac.cn}

\author{Guodong Wang}
\address{School of Mathematical Sciences, Dalian University of Technology, Dalian 116024, P.R. China}
\email{gdw@dlut.edu.cn}

\subjclass[2020]{Primary 35Q31; Secondary 76B47, 35B40.}
\keywords{Two-dimensional Euler equations; high-order moments;
support growth.}

\begin{abstract}
We prove an $O(t^{1/4})$ bound on the support radius of
nonnegative planar Euler vorticity with compactly supported
$L^1$ initial data, removing the logarithmic factors from the
classical confinement estimates. The result holds in the
symmetrized vorticity formulation. A factorization of the
interaction kernel yields a quadratic convolution inequality
for high-order moments. Retaining this convolution allows an
elementary comparison for finite sums of moments to control
the full support. We also prove that the squared support radius
is H\"older continuous in time with exponent $1/2$.
\end{abstract}

\maketitle

\section{Introduction}\label{sec:introduction}

The transport of vorticity is a basic feature of two-dimensional
incompressible Euler flows. For classical solutions, vorticity is
preserved along particle trajectories, but its support may deform.
Conservation of the area of a vortex patch does not by itself control
its diameter.
Quantitative confinement estimates describe how far vorticity can
travel from its initial support. For vorticity of one sign, the
conservation of the center of vorticity and the moment of inertia
provides additional information about this motion. The purpose of
this paper is to improve the resulting upper bound for the growth
of the support.

\subsection{The problem and earlier results}

Consider the two-dimensional incompressible Euler equations in
vorticity form,
\begin{equation}\label{eq:euler}
 \begin{cases}
 \partial_t\omega+\mathbf u\cdot\nabla\omega=0,
     &t>0,\quad\mathbf x\in\mathbb R^2,\\[2pt]
 \displaystyle
 \mathbf u(t,\mathbf x)=\frac1{2\pi}
 \int_{\mathbb R^2}\frac{(\mathbf x-\mathbf y)^\perp}
 {|\mathbf x-\mathbf y|^2}\omega(t,\mathbf y)\,\mathrm d\mathbf y,\\[6pt]
 \omega(0,\mathbf x)=\omega_0(\mathbf x),
 \end{cases}
\end{equation}
where $(x_1,x_2)^\perp=(-x_2,x_1)$. We assume that
\begin{equation}\label{eq:data}
 0\leq\omega_0\in L^1(\mathbb R^2),\qquad
 \operatorname{supp}\omega_0\text{ is compact},\qquad
 \omega_0\not\equiv0.
\end{equation}
The support of a measurable function means its essential support,
namely the complement of the largest open set on which it vanishes
almost everywhere.

We consider nonnegative solutions in the class
\begin{equation}\label{eq:solution-class}
 \omega\in C_{\mathrm w}([0,\infty);L^1(\mathbb R^2)),
\end{equation}
where $C_{\mathrm w}$ denotes continuity in the weak topology of
$L^1$. At low regularity, we use the symmetrized vorticity
formulation of \cite{BBC2016}.
For $\varphi\in C_c^\infty(\mathbb R^2)$, set
\[
 \mathcal H_\varphi(\mathbf x,\mathbf y)
 =\frac{[\nabla\varphi(\mathbf x)-\nabla\varphi(\mathbf y)]
                 \cdot(\mathbf x-\mathbf y)^\perp}
        {4\pi|\mathbf x-\mathbf y|^2},
 \qquad \mathbf x\neq\mathbf y,
\]
and assign the value zero on the diagonal.  We require $\omega$ to satisfy
\begin{equation}\label{eq:weak-formulation}
 \begin{aligned}
 &\int_{\mathbb R^2}\varphi(\mathbf x)\omega(t,\mathbf x)\,\mathrm d\mathbf x
 -\int_{\mathbb R^2}\varphi(\mathbf x)\omega_0(\mathbf x)\,\mathrm d\mathbf x\\
 &\qquad=\int_0^t\iint_{\mathbb R^2\times\mathbb R^2}
 \mathcal H_\varphi(\mathbf x,\mathbf y)
 \omega(s,\mathbf x)\omega(s,\mathbf y)
 \,\mathrm d\mathbf x\,\mathrm d\mathbf y\,\mathrm ds,
 \qquad t\geq0.
 \end{aligned}
\end{equation}
The kernel is bounded by $(4\pi)^{-1}\|D^2\varphi\|_{L^\infty}$,
where the norm of the Hessian is its operator norm. Thus the
right-hand side is well defined for $L^1$ vorticity.
Under \eqref{eq:solution-class}, integration by parts in time
identifies \eqref{eq:weak-formulation} with the space-time test
formulation in \cite{BBC2016}.

For each data satisfying \eqref{eq:data}, nonnegative global
solutions in this class exist. In particular, smooth
mollifications of the initial vorticity yield, along a subsequence,
solutions converging in $C([0,T];L^1)$ on every finite time
interval. The limits are transported by measure-preserving
Lagrangian flows; see \cite{BBC2016}.
If the initial vorticity also belongs to $L^p$, $1<p\leq\infty$,
these solutions preserve its $L^p$ norm.
Our estimate applies to every nonnegative solution satisfying
\eqref{eq:solution-class}--\eqref{eq:weak-formulation}.

If, in addition, $\omega\in L^\infty_{\mathrm{loc}}([0,\infty);L^p)$
for some $1<p\leq\infty$, then the Biot--Savart velocity belongs to
$L^\infty_{\mathrm{loc}}([0,\infty);L^2_{\mathrm{loc}}(\mathbb R^2))$.
In this case, \eqref{eq:weak-formulation} is equivalent to the usual
weak velocity formulation of Euler by
\cite{BBC2016}.
At $L^1$ regularity, the velocity need not have locally finite
kinetic energy, so we retain the symmetrized formulation.
For bounded initial vorticity, our class includes the unique Yudovich solution
\cite{Yudovich1963,MarchioroPulvirenti1994,MB2002}.

Define the initial circulation, center, and moment of inertia by
\begin{equation}\label{eq:invariants}
 \begin{aligned}
 \Gamma&=\int_{\mathbb R^2}\omega_0(\mathbf x)\,\mathrm d\mathbf x,
 \qquad\mathbf c=\frac1\Gamma\int_{\mathbb R^2}\mathbf x\omega_0(\mathbf x)
                       \,\mathrm d\mathbf x,\qquad
 J=\int_{\mathbb R^2}|\mathbf x-\mathbf c|^2\omega_0(\mathbf x)
                       \,\mathrm d\mathbf x.
 \end{aligned}
\end{equation}
Here $\Gamma,J>0$: if $J=0$, the nonnegative integrable function
$\omega_0$ would vanish almost everywhere outside the single point
$\mathbf c$, contradicting $\Gamma>0$.
Lemma~\ref{lem:conservation} proves
conservation of these quantities directly from
\eqref{eq:weak-formulation}. Let $B_r(\mathbf a)$ denote the open
disk of radius $r$ centered at $\mathbf a$. Set
\begin{equation}\label{eq:radius}
 R(t)=\sup\{|\mathbf x-\mathbf c|:
                  \mathbf x\in\operatorname{supp}\omega(t,\cdot)\}.
\end{equation}
The value $+\infty$ is allowed in this definition until confinement
has been proved. We write $R_0=R(0)>0$, so that
$\operatorname{supp}\omega_0\subset\overline B_{R_0}(\mathbf c)$.
All asymptotic bounds below concern $t\to\infty$ with the initial
data fixed.

For bounded vorticity, the uniform velocity bound gives the
preliminary estimate $R(t)=O(t)$; see
\cite[Section~2]{ISG1999}. Nonnegativity and conservation of $J$
also yield
\begin{equation}\label{eq:inertia-tail}
 \int_{\mathbb R^2\setminus\overline B_r(\mathbf c)}\omega(t,\mathbf x)
       \,\mathrm d\mathbf x\leq\frac{J}{r^2},
 \qquad r>0.
\end{equation}
This controls the vorticity mass at large distances. To bound the
support itself, it is also necessary to control the motion of portions of
vorticity carrying arbitrarily small mass. The classical arguments
combine estimates such as \eqref{eq:inertia-tail} with the cancellation
of the first moment in the radial component of the velocity.

Marchioro \cite{Marchioro1994} proved that $R(t)=O(t^{1/3})$ for
bounded vorticity of one sign. Lopes Filho and Nussenzveig Lopes
\cite{Lopes1998} extended this estimate to compactly supported
vorticities of one sign in $L^p$, with $p>2$. Iftimie, Sideris, and
Gamblin \cite{ISG1999} subsequently obtained
\begin{equation}\label{eq:classical-bound}
 R(t)=O\bigl((t\log t)^{1/4}\bigr).
\end{equation}
The same estimate holds for compactly supported nonnegative
initial vorticity in $L^p$, $2<p\leq\infty$, as noted in
the concluding remark of \cite{ISG1999}.
Their proof estimates the vorticity outside large disks and then
controls the radial velocity on a suitable expanding boundary.
The appendix of \cite{ISG1999} contains an alternative proof, due
to Gamblin, based on high-order moments.

Serfati independently obtained bounds with any fixed number of
iterated logarithms. In the form recorded by Iftimie
\cite[Remark~2]{Iftimie1999}, these bounds read
\begin{equation}\label{eq:serfati-bound}
 R(t)\leq\widetilde C_k\bigl[t\log^{\circ k}t\bigr]^{1/4},
 \qquad t\geq t_k
\end{equation}
for every fixed integer $k\geq1$. Here $\log^{\circ k}$ denotes the
$k$-fold composition of the natural logarithm, and $\widetilde C_k,t_k$ may
depend on $k$ and the initial data. The result is attributed in
\cite{Iftimie1999} to the unpublished manuscript
\cite{SerfatiManuscript}. The dependence of
the constants on $k$ means that \eqref{eq:serfati-bound} alone does
not imply a bound of order $t^{1/4}$.

The sign condition is essential. Iftimie, Sideris, and Gamblin
\cite{ISG1999} proved linear growth of the support
diameter for initial data obtained by extending a nonzero bounded
nonnegative blob, compactly supported in the first quadrant, oddly
across each coordinate axis. Using the orbital stability of the
Lamb dipole established by Abe and Choi \cite{AbeChoi2022}, Choi
and Jeong \cite{ChoiJeong2022} constructed smooth,
compactly supported vorticity with changing sign whose support
diameter grows at least linearly for all sufficiently large times. Choi, Jeong, and Yao \cite{CJY2024} proved 
orbital stability of two sufficiently separated Lamb dipoles
moving away from each other, under perturbations that are odd
with respect to both coordinate axes and nonnegative in the
first quadrant. Abe, Jeong, and Yao \cite{AJY2025} established
stability of finite sums of Lamb dipoles for nonnegative
vorticity in the upper half-plane, assuming sufficient initial
separation and distinct propagation speeds ordered increasingly
from left to right.
Choi, Jeong, and Sim \cite{CJS2025} proved the existence of
Sadovskii vortex patches. Abe, Choi, Jeong, Sim, and Woo
\cite{ACJSW2026} described sign-changing perturbations
of such dipoles with a thin trailing region whose support diameter
grows at least linearly in time.

For compactly supported $L^p$ initial vorticity with
$1\leq p\leq2$, Hounie, Lopes Filho, and Nussenzveig Lopes
\cite{HLL1999} obtained uniform estimates on the vorticity mass
and the measure of its support outside large disks along smooth
approximations, together with corresponding a priori estimates
for weak limits.

For bounded, nonnegative and compactly supported vorticity outside
a smooth bounded simply connected obstacle, Marchioro
\cite{Marchioro1996} proved $O(t^{1/2+\varepsilon})$ confinement
for every $\varepsilon>0$ and $O(t^{1/3})$ confinement outside a
disk. Iftimie, Lopes Filho, and Nussenzveig Lopes \cite{ILL2007}
improved these bounds to $O(t^{1/2})$ and, for vorticity even about
the center of a disk, $O((t\log t)^{1/4})$, respectively.
These exterior flows include a harmonic velocity determined by
the circulation around the obstacle.

Ambrose, Lopes Filho, and Nussenzveig Lopes
\cite{Ambrose2018} obtained $O((t\log t)^{1/4})$ confinement of
unfiltered Euler-$\alpha$ vorticity with nonnegative compactly
supported bounded Radon measures as initial data.
For bounded, nonnegative and compactly supported vorticity in an
infinite cylinder, Choi and Denisov \cite{ChoiDenisov2019} bounded
the support diameter in the unbounded direction by
$O(t^{1/3}\log^2t)$.
Butt\`a and Cavallaro \cite{ButtaCavallaro2026}
refined this estimate to $o((t(\log t)^\beta)^{1/3})$ for every
fixed $\beta>1$. In the half-plane, Chen and Sun \cite{ChenSun2026} established the linear growth of the support diameter for smooth nonnegative initial data approximating any nonzero bounded, compactly supported, nonnegative vorticity in $L^1$ sense.
The odd extension of such vorticity to the whole plane changes sign.

Marchioro and Pulvirenti \cite{MarchioroPulvirenti1993} studied
localization near point-vortex motion as the initial core size
tends to zero. Iftimie and Marchioro \cite{IM2018} treated a positive
blob transported by its own velocity and the prescribed field of
a self-similar expanding point-vortex configuration.
For any fixed sufficiently small $\varepsilon>0$, Zbarsky
\cite{Zbarsky2021} constructed full Euler solutions with three
vorticity components whose circulations include both signs.
Their separation is of order $t^{1/2}$, and their radii are
$O(t^{1/4+\varepsilon})$ about their moving centers.
These component estimates concern a different
question from confinement of the entire nonnegative support in
\eqref{eq:data}. See \cite{Iftimie2007} for a broader account of
large-time behavior in incompressible Euler flows.

\subsection{Main result}

Our main result removes the logarithmic factors from the preceding
whole-plane estimates.

\begin{theorem}\label{thm:main}
Let $\omega\in C_{\mathrm w}([0,\infty);L^1(\mathbb R^2))$ be
nonnegative and satisfy \eqref{eq:weak-formulation}.
Assume that $\omega_0\not\equiv0$ has compact support.
Let $J$ be defined by
\eqref{eq:invariants}, and set $R_0=R(0)$, where $R(t)$ is defined
in \eqref{eq:radius}. Then we have
\begin{equation}\label{eq:main}
 R(t)^2\leq R_0^2+\frac{16}{\sqrt{27\pi}}\sqrt{Jt},
 \qquad t\geq0.
\end{equation}
Consequently,
\begin{equation}\label{eq:asymptotic}
 R(t)=O(t^{1/4}),\qquad
 \operatorname{diam}(\operatorname{supp}\omega(t,\cdot))=O(t^{1/4}).
\end{equation}
\end{theorem}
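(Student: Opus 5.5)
The plan is to control $R(t)$ through high-order moments about the conserved center $\mathbf c$. Set
\[
 M_n(t)=\int_{\mathbb R^2}|\mathbf x-\mathbf c|^{2n}\omega(t,\mathbf x)\,\mathrm d\mathbf x,
\]
so that $M_n(t)^{1/n}\to R(t)^2$ as $n\to\infty$ (this is standard for the essential support of a nonnegative $L^1$ function). By Lemma~\ref{lem:conservation}, $M_0=\Gamma$ and $M_1=J$ are constant. It therefore suffices to prove, for all $n$ large and all $t\ge0$,
\[
 M_n(t)\le A_n\,\rho(t)^{n},\qquad \rho(t)=R_0^2+b\sqrt{Jt},\quad b=\tfrac{16}{\sqrt{27\pi}},
\]
with $A_n^{1/n}\to1$. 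Letting $n\to\infty$ then yields \eqref{eq:main}, and \eqref{eq:asymptotic} follows since the diameter is at most $2R(t)$.

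\textbf{Step 1 (moment identity).} Translate so that $\mathbf c=0$ and use $\varphi(\mathbf x)=|\mathbf x|^{2n}$ in \eqref{eq:weak-formulation}. Since $R(t)$ may be infinite a priori, first use a cutoff $\varphi_\varepsilon=|\mathbf x|^{2n}\chi(\varepsilon\mathbf x)$, or the saturated weight $(|\mathbf x|^2\wedge L)^n$ smoothed. Then remove the cutoff by a Gronwall-type bound on the truncated moments, together with the tail estimate \eqref{eq:inertia-tail}.

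For the untruncated weight we have $\nabla\varphi(\mathbf x)=2n|\mathbf x|^{2n-2}\mathbf x$. Using $\mathbf x\cdot(\mathbf x-\mathbf y)^\perp=\mathbf y\cdot(\mathbf x-\mathbf y)^\perp=-\mathbf x\cdot\mathbf y^\perp$, the kernel factorizes as
\[
 \mathcal H_\varphi(\mathbf x,\mathbf y)
 =-\frac{2n\,(|\mathbf x|^{2n-2}-|\mathbf y|^{2n-2})\,\mathbf x\cdot\mathbf y^\perp}{4\pi|\mathbf x-\mathbf y|^2}.
\]
Next write
\[
 |\mathbf x|^{2n-2}-|\mathbf y|^{2n-2}=(|\mathbf x|^2-|\mathbf y|^2)\sum_{k=0}^{n-2}|\mathbf x|^{2k}|\mathbf y|^{2(n-2-k)},
 \qquad |\mathbf x|^2-|\mathbf y|^2=(\mathbf x-\mathbf y)\cdot(\mathbf x+\mathbf y),
\]
and $|\mathbf x\cdot\mathbf y^\perp|=|\mathbf x\wedge\mathbf y|\le|\mathbf x-\mathbf y|\min(|\mathbf x|,|\mathbf y|)$. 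These cancel the singular denominator.

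\textbf{Step 2 (quadratic convolution inequality).} The factorization gives a pointwise bound $|\mathcal H_\varphi|\le\frac{n}{2\pi}P_n(|\mathbf x|,|\mathbf y|)$, with $P_n$ a homogeneous polynomial of degree $2n-2$ in $|\mathbf x|,|\mathbf y|$. I will sharpen this using $|\mathbf x\wedge\mathbf y|\,|\mathbf x+\mathbf y|\le\sqrt{\cdot}$-type inequalities, together with the antisymmetry that the first moment $\int\mathbf x\,\omega=0$ provides. Integrating against $\omega\otimes\omega$ should yield
\[
 M_n'(t)\le\frac{n}{2\pi}\sum_{k}c_{n,k}\,M_{k+1/2}M_{n-k-1/2},
\]
with half-integer moments. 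Each half-integer moment is then bounded by Cauchy--Schwarz: $M_{j+1/2}\le\sqrt{M_jM_{j+1}}$. The essential feature is to keep the whole convolution, rather than crudely bounding it by $M_1M_{n-1}$, which is what loses logarithms.

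\textbf{Step 3 (comparison).} This is the main obstacle. I would treat the finite sums $S_N(t)=\sum_{n\le N}\lambda^{n}M_n(t)/\rho(t)^n$, or the truncated generating function, as the comparison object. Using $\rho'=\frac{b\sqrt J}{2\sqrt t}$, I would show that the convolution terms are dominated by $-\sum n\lambda^nM_n\rho'/\rho^{n+1}$ once the two lowest-order factors are bounded using $M_0=\Gamma$ and $M_1=J$. The $\sqrt{Jt}$ scaling comes from balancing $n\rho'/\rho$ against $n\sqrt{J}\,\rho^{-1/2}M_{n-1/2}/M_n$. The constant $16/\sqrt{27\pi}$ should emerge from optimizing an elementary expression of the form $\max_s s(1-s)^{2}$ (whose maximum is $4/27$) in the balance. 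I would first try the ansatz above and check the differential inequality directly, so that $S_N(t)\le S_N(0)\le\sum_{n\le N}\lambda^n\Gamma$, uniformly in time. This gives $M_N(t)\le C\lambda^{-N}\rho(t)^N$; letting $N\to\infty$ and then $\lambda\to1$ gives the claim. If uniformity in $N$ fails, the fallback is an induction on $n$ with $A_n$ growing at most geometrically.
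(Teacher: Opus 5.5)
Your Step~2 is missing the estimate that decides the rate $t^{1/4}$. The bounds you write, $|[\mathbf x,\mathbf y]|\le|\mathbf x-\mathbf y|\min(|\mathbf x|,|\mathbf y|)$ and $\bigl||\mathbf x|^2-|\mathbf y|^2\bigr|\le|\mathbf x-\mathbf y||\mathbf x+\mathbf y|$, keep the dipole part of the kernel. For $|\mathbf y|\ll|\mathbf x|$ it is of order $n|\mathbf x|^{2n-3}|\mathbf y|$, which integrates to a term $\sim nM_{1/2}M_{n-3/2}\le n\sqrt{\Gamma J}\,M_{n-3/2}$. (The products have total degree $2n-2$, not $2n$ as in your display.) With $M_n\approx\Gamma\rho^n$ this only gives $\rho^{1/2}\rho'\lesssim\sqrt{\Gamma J}$, i.e.\ Marchioro's $O(t^{1/3})$. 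Your balancing heuristic yields $\rho'\sim\sqrt J$, which is not dimensionally consistent, since $J$ has units length$^4$/time. The balance behind $\sqrt{Jt}$ is $\rho\rho'\sim J$, so every factor in the products controlling a moment derivative must be at least $J$. Getting this requires using $\int\mathbf x\,\omega=0$ concretely, and ``$\sqrt{\cdot}$-type inequalities'' do not supply it. In the paper this is Lemma~\ref{lem:kernel}. For the weight $|\mathbf x|^{4n+2}$ one subtracts $(|\mathbf x|^{4n-2}-|\mathbf y|^{4n-2})[\mathbf x,\mathbf y]$, which integrates to zero against $\omega\otimes\omega$. The factorization $\mathcal K_n=P_n\mathcal K_1-\frac{|\mathbf x|^2|\mathbf y|^2}{|\mathbf x|^2+|\mathbf y|^2}(|\mathbf x|^{4n-4}-|\mathbf y|^{4n-4})[\mathbf x,\mathbf y]$ and the bound $|\mathcal K_1|\le2|\mathbf x|^2|\mathbf y|^2$ (from an exact square identity) then give $|\mathcal K_n|\le2|\mathbf x|^2|\mathbf y|^2P_n$. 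The constant is independent of $n$, also when $|\mathbf x|\approx|\mathbf y|$. The result is $|H_n'|\le\frac{2n+1}{\pi}\sum_{j=0}^{n-1}H_jH_{n-1-j}$ with $H_0=J$, a clean convolution in which $\Gamma$ never appears.

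The second gap is Step~3, where your primary ansatz cannot be closed. A sum $S_N=\sum_{n\le N}\lambda^nM_n\rho^{-n}$ with $\rho\sim\sqrt{Jt}$ cannot be proved nonincreasing from a quadratic moment inequality. Nothing excludes a large time at which $\omega$ is concentrated near the circle $|\mathbf x|^2=J/\Gamma$ (a thin radial ring is even steady), so that $M_{1/2}^2=\Gamma J$ and $M_n=\Gamma(J/\Gamma)^n$. There the inequality allows $M_2'$ of order $\Gamma J$. The only guaranteed negative contribution is $-\frac{\rho'}{\rho}\sum_n n\lambda^nM_n\rho^{-n}\approx-\lambda J\rho'\rho^{-2}$, smaller by a factor of order $\rho'/(\lambda\Gamma)\to0$. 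So the comparison bound must grow in time, and that mechanism is what is missing. The paper uses $F_N(t,z)=J^{-1}\sum_{n\le N}H_nz^n$. The convolution is the coefficient of $F_N^2$ and $2n+1$ acts as $3z+2z^2\partial_z$, so $\partial_tF_N\le\frac J\pi(3zF_N^2+4z^2F_N\partial_zF_N)$. Along $z'=-\frac{4J}{\pi}z^2A$, $A'=\frac{3J}{\pi}zA^2$, the difference $W_N=F_N(t,z(t))-A(t)$ satisfies $W_N'\le g_NW_N$ and $W_N(0)\le0$. Hence $H_n\le JA(t)z(t)^{-n}$ with $A(t)=\frac{R_0^4+\delta}{\delta}(1+\frac{Jt}{\pi\delta})^3$, which grows but is harmless since $A^{1/n}\to1$. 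This gives $R(t)^4\le(R_0^4+\delta)(1+\frac{Jt}{\pi\delta})^4$, and $16/\sqrt{27\pi}$ comes from choosing $\delta$, not from $\max s(1-s)^2$. For $R_0=0$, $\min_{\delta>0}(\delta+\tau)^4\delta^{-3}=\frac{256}{27}\tau$ with $\tau=Jt/\pi$. Your fallback induction can give $O(t^{1/4})$ only once this convolution is in hand and kept intact. Integrating in time supplies a factor $1/n$ that offsets $2n+1$, leaving a Catalan-type recursion whose geometric rate is read off from a generating function obeying the same equation. Bounding the convolution by $n$ times its largest term instead brings back the extra factor $n$ in \eqref{eq:classical-moments} and the $\log t$.
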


\begin{remark}
Theorem~\ref{thm:main} also applies to nonzero, nonnegative
and compactly supported initial data in $L^p(\mathbb R^2)$,
$1<p\leq\infty$, since such data belong to $L^1(\mathbb R^2)$.
\end{remark}

Moreover, we have the following corollary.

\begin{corollary}\label{cor:time-holder}
Under the assumptions of Theorem~\ref{thm:main}, we have
\begin{equation}\label{eq:time-holder}
 |R(t)^2-R(s)^2|
 \leq\frac{16}{\sqrt{27\pi}}\sqrt{J|t-s|},
 \qquad s,t\geq0.
\end{equation}
\end{corollary}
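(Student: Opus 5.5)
The corollary follows from Theorem~\ref{thm:main} by restarting the flow at an intermediate time. The theorem uses only conserved quantities and the weak formulation, so it applies to the time-shifted solution.

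Fix $s\geq0$ and set $\tilde\omega(\tau,\cdot)=\omega(s+\tau,\cdot)$ for $\tau\geq0$. We check the hypotheses of Theorem~\ref{thm:main} for $\tilde\omega$.
\begin{itemize}
\item[(i)] $\tilde\omega\in C_{\mathrm w}([0,\infty);L^1)$ and $\tilde\omega\geq0$.
\item[(ii)] $\tilde\omega$ satisfies \eqref{eq:weak-formulation} with initial datum $\omega(s,\cdot)$. This follows by subtracting the identity \eqref{eq:weak-formulation} at time $s$ from the identity at time $s+\tau$.
\item[(iii)] $\omega(s,\cdot)\not\equiv0$, since by Lemma~\ref{lem:conservation} its integral equals $\Gamma>0$.
\item[(iv)] By Lemma~\ref{lem:conservation}, $\omega(s,\cdot)$ has the same center $\mathbf c$ and the same moment of inertia $J$ as $\omega_0$.
\end{itemize}
For compactness of the support, we may assume $R(s)<\infty$: if $R(s)=\infty$ the claimed inequality is trivial where relevant. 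By the theorem applied at time $s$ itself, $R(s)^2\leq R_0^2+C\sqrt{Js}<\infty$ with $C=16/\sqrt{27\pi}$, so $\omega(s,\cdot)$ has compact support.

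Since the center is the same $\mathbf c$, the radius function of $\tilde\omega$ is $\tilde R(\tau)=R(s+\tau)$, and its initial value is $R(s)$. Theorem~\ref{thm:main} applied to $\tilde\omega$ gives
\[
R(t)^2\leq R(s)^2+C\sqrt{J(t-s)},\qquad t\geq s.
\]

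For the reverse inequality $R(s)^2-R(t)^2\leq C\sqrt{J(t-s)}$ with $s\leq t$, we use time reversibility. For $T\geq0$, set $\hat\omega(\tau,\mathbf x)=\omega(T-\tau,\mathbf x')$ for $\tau\in[0,T]$, where $\mathbf x'=(x_1,-x_2)$ is the reflection. Time reversal combined with a reflection maps solutions to solutions. At the level of \eqref{eq:weak-formulation}, the reflection flips the sign of $\mathcal H_\varphi$ once the change of variables is carried out, because $(\mathbf x-\mathbf y)^\perp$ changes orientation under reflection. This sign flip compensates the reversal of the time integral.

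The theorem is stated on $[0,\infty)$, while $\hat\omega$ is only defined on $[0,T]$. Its proof only uses the equation on $[0,\tau]$ for the time $\tau$ at which the bound is evaluated, so it holds on $[0,T]$ as well. To keep this rigorous without reopening the proof, extend $\hat\omega$ beyond $T$ by the forward solution: for $\tau\geq T$, set $\hat\omega(\tau,\mathbf x)=\omega(\tau-T,\mathbf x')$. This extension is itself a reflected forward solution, and the concatenation satisfies \eqref{eq:weak-formulation} with datum $\omega(T,\cdot')$.

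We then apply Theorem~\ref{thm:main} to $\hat\omega$ with $T=t$. The reflection sends the center $\mathbf c$ to $\mathbf c'$ and preserves $J$ and all radii about the center. The theorem gives
\[
R(s)^2\leq R(t)^2+C\sqrt{J(t-s)},\qquad 0\leq s\leq t.
\]
Together with the forward bound, this yields \eqref{eq:time-holder}.

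The main obstacle is this backward direction. One must verify carefully that the reflected, time-reversed (and concatenated) function satisfies the symmetrized formulation \eqref{eq:weak-formulation}, and that the invariants are preserved under the reflection.
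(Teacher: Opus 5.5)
Your forward bound is correct, and so is your claim that $\hat\omega(\tau,\mathbf x)=\omega(T-\tau,\mathbf x')$ satisfies \eqref{eq:weak-formulation} on $[0,T]$. With $\tilde\varphi(\mathbf x)=\varphi(\mathbf x')$ one has $\mathcal H_{\tilde\varphi}(\mathbf x,\mathbf y)=-\mathcal H_\varphi(\mathbf x',\mathbf y')$, and this sign cancels the one coming from reversing time. Your verification of the center, $J$ and the radii is also fine.

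\textbf{The gap is the extension to $\tau\geq T$.} The map $\tau\mapsto\omega(\tau-T,\mathbf x')$ is a reflection \emph{without} time reversal. The same identity $\mathcal H_{\tilde\varphi}(\mathbf x,\mathbf y)=-\mathcal H_\varphi(\mathbf x',\mathbf y')$ shows that it satisfies \eqref{eq:weak-formulation} with $\mathcal H_\varphi$ replaced by $-\mathcal H_\varphi$, so it solves the time-reversed equation. For example, a pair of nonnegative blobs rotates counterclockwise, while its mirror image run forward in time rotates clockwise, the wrong sense for nonnegative vorticity. Consequently your concatenation fails \eqref{eq:weak-formulation} on $(T,\infty)$ unless $\omega$ is stationary. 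So Theorem~\ref{thm:main}, which requires a solution on $[0,\infty)$, cannot be applied to it as stated. There is also no explicit repair built from $\omega$ alone: the forward solution from $\omega_0(\cdot')$ would be $\omega(-\sigma,\mathbf x')$, which needs negative times.

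\textbf{Repairs.} The step can be fixed in either of two ways.
\begin{itemize}
\item Extend by a genuine forward solution starting from $\omega_0(\cdot')$. Such a solution exists by the theory of \cite{BBC2016} cited in the introduction, and concatenating two solutions of \eqref{eq:weak-formulation} gives a solution.
\item Use your own earlier remark that the proof of Theorem~\ref{thm:main} is local in time: Lemma~\ref{lem:conservation} is proved on an arbitrary $[0,T]$, and the comparison argument runs on $[0,\tau]$.
\end{itemize}

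\textbf{The paper's route.} The paper takes the second path, at the level of moments. Because Proposition~\ref{prop:moment-inequality} bounds $|H_n'|$, the reversed moments $H_n(t-\sigma)$ satisfy the same convolution inequality, with initial bound $JR(t)^{4n}$. The finite-sum comparison then gives $R(s)^4\leq(R(t)^4+\delta)(1+J(t-s)/(\pi\delta))^4$ directly. This needs no reflection and no new solution. Since the $H_n$ depend only on $|\mathbf x|$, the orientation issue that tripped up your extension never arises.
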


Table~\ref{tab:bounds} compares whole-plane results for nonnegative,
compactly supported initial vorticity. The last row concerns
solutions in \eqref{eq:solution-class}--\eqref{eq:weak-formulation}.

\begin{table}[htbp]
\centering
\caption{Confinement estimates in the whole plane.}
\label{tab:bounds}
\small
\begin{tabular}{@{}>{\raggedright\arraybackslash}p{0.40\textwidth}
                  >{\raggedright\arraybackslash}p{0.18\textwidth}
                  >{\raggedright\arraybackslash}p{0.35\textwidth}@{}}
\toprule
Reference & Initial regularity & Estimate \\
\midrule
Marchioro \cite{Marchioro1994}
 & $L^\infty$ & $R(t)=O(t^{1/3})$ \\
\addlinespace[4pt]
Lopes Filho--Nussenzveig Lopes \cite{Lopes1998}
 & $L^p$, $2<p\leq\infty$ & $R(t)=O(t^{1/3})$ \\
\addlinespace[4pt]
Iftimie--Sideris--Gamblin \cite{ISG1999}
 & $L^p$, $2<p\leq\infty$ & $R(t)=O((t\log t)^{1/4})$ \\
\addlinespace[4pt]
Serfati, as reported in \cite[Remark~2]{Iftimie1999}
 & $L^\infty$ & $R(t)=O((t\log^{\circ k}t)^{1/4})$ \\
\addlinespace[4pt]
Theorem~\ref{thm:main}
 & $L^1$ & $R(t)=O(t^{1/4})$ \\
\bottomrule
\end{tabular}
\end{table}

In the Serfati row, $k\geq1$ is fixed before $t\to\infty$.

\subsection{Main steps of the proof}

The proof uses the high-order moment approach developed in the
appendix of \cite{ISG1999}. To explain the dependence on the moment
order, first translate the center of vorticity to the origin.
For bounded vorticity, Iftimie, Sideris, and Gamblin consider the moments
\[
 m_n(t)=\int_{\mathbb R^2}|\mathbf x|^{4n}
                  \omega(t,\mathbf x)\,\mathrm d\mathbf x,
 \qquad n\geq0
\]
and prove the estimates
\begin{equation}\label{eq:classical-moments}
 m_n'(t)\leq C Jn^2m_{n-1}(t),\qquad
 m_n(t)\leq\Gamma(R_0^4+C Jnt)^n.
\end{equation}
Here $n\geq1$, the differential inequality holds almost everywhere
in time, and the moment bound holds for every $t\geq0$. The
constant $C$ is absolute. See \cite[Appendix, Lemma~A.1 and
its proof]{ISG1999}. In the confinement argument, the moment order
is chosen proportional to $\log t$, so that the tail is small
enough to control the radial velocity. The factor $n$ in the
second estimate of \eqref{eq:classical-moments} then contributes
the logarithm in \eqref{eq:classical-bound}.

We use the moments
\begin{equation}\label{eq:moments}
 H_n(t)=\int_{\mathbb R^2}|\mathbf x|^{4n+2}
                  \omega(t,\mathbf x)\,\mathrm d\mathbf x,
 \qquad n\geq0,
\end{equation}
where the factor $|\mathbf x|^2$ gives $H_0(t)=J$ by
Lemma~\ref{lem:conservation}. The increment of four in the exponent
is chosen so that the kernel estimate below produces products of
the form $H_j(t)H_{n-1-j}(t)$.
After symmetrization, the classical first-moment
cancellation removes two terms from the interaction kernel.
We then factor the remaining kernel and estimate it by
\[
 2\sum_{j=0}^{n-1}
       |\mathbf x|^{4j+2}|\mathbf y|^{4(n-1-j)+2}.
\]
The coefficient is independent of $n$, including when the radii
$|\mathbf x|$ and $|\mathbf y|$ are close. This yields
\begin{equation}\label{eq:convolution-intro}
 |H_n'(t)|\leq\frac{2n+1}{\pi}
                 \sum_{j=0}^{n-1}H_j(t)H_{n-1-j}(t)
\end{equation}
for every integer $n\geq1$ and almost every $t\geq0$.

The intermediate products in \eqref{eq:convolution-intro} are
crucial. Indeed, for $n\geq2$ and $0\leq j\leq n-1$,
H\"older's inequality implies
\[
 H_j(t)\leq J^{1-j/(n-1)}H_{n-1}(t)^{j/(n-1)},\qquad
 H_j(t)H_{n-1-j}(t)\leq JH_{n-1}(t).
\]
Replacing each product by this upper bound would introduce an
additional factor $n$. We instead retain the convolution and,
for each integer $N\geq1$, apply \eqref{eq:convolution-intro}
to the finite polynomial
\[
 F_N(t,z)=\frac1J\sum_{n=0}^NH_n(t)z^n,
 \qquad t\geq0,\quad z\geq0.
\]
The coefficients of $F_N^2$ contain the convolution sums in
\eqref{eq:convolution-intro}, leading to the differential inequality
\eqref{eq:polynomial-inequality}. We choose a curve $z=z(t)>0$ and
a comparison function $A(t)>0$ so that, along this curve, the
$\partial_zF_N$ term cancels when $F_N=A$, while $A'$ matches the
remaining quadratic term; see \eqref{eq:comparison-odes}.
An elementary differential comparison then gives
$F_N(t,z(t))\leq A(t)$ independently of $N$. This controls
the growth of $H_n(t)$ as $n\to\infty$. At each fixed time, the
resulting tail estimate yields
\[
 R(t)^4\leq(R_0^4+\delta)
       \left(1+\frac{Jt}{\pi\delta}\right)^4,
 \qquad \delta>0.
\]
Choosing $\delta$ in terms of $R_0,J$ and $t$ proves \eqref{eq:main}.

The improvement therefore rests on the factorization of the
symmetrized kernel and on retaining the resulting moment
convolution. The conservation laws and the first-moment
cancellation are the same as in the earlier confinement arguments.
The moment identities are justified by compactly supported test
functions, without a preliminary bound on the support or velocity.

The rest of the paper is organized as follows.
Section~\ref{sec:moments} establishes the moment inequality. In
Section~\ref{sec:confinement}, we prove Theorem~\ref{thm:main}
and Corollary~\ref{cor:time-holder}.

\section{Estimates for high-order moments}\label{sec:moments}

\subsection{Weak formulation and moment identities}\label{subsec:transport-moments}

The following lemma establishes the
conservation laws and moment identities in the full $L^1$ class
\eqref{eq:solution-class}--\eqref{eq:weak-formulation}.

\begin{lemma}\label{lem:conservation}
Let $\omega$ be nonnegative and satisfy
\eqref{eq:solution-class}--\eqref{eq:weak-formulation}, with initial
data satisfying \eqref{eq:data}. Then, for every $t\geq0$,
\[
 \begin{aligned}
 \int_{\mathbb R^2}\omega(t,\mathbf x)\,\mathrm d\mathbf x&=\Gamma,\qquad
 \int_{\mathbb R^2}\mathbf x\omega(t,\mathbf x)\,\mathrm d\mathbf x=\Gamma\mathbf c,\qquad
 \int_{\mathbb R^2}|\mathbf x-\mathbf c|^2\omega(t,\mathbf x)\,\mathrm d\mathbf x=J.
 \end{aligned}
\]
For each integer $k\geq0$, the moment
\[
 M_k(t)=\int_{\mathbb R^2}|\mathbf x|^{2k}\omega(t,\mathbf x)\,\mathrm d\mathbf x
\]
is finite and locally absolutely continuous on $[0,\infty)$.
For $k\geq1$, set $f_k(\mathbf x)=|\mathbf x|^{2k}$ and define
$\mathcal H_{f_k}$ by the same formula as $\mathcal H_\varphi$.
Then, for every $t\geq0$,
\begin{equation}\label{eq:polynomial-identity}
 \begin{aligned}
 &M_k(t)-M_k(0)
=\int_0^t\iint_{\mathbb R^2\times\mathbb R^2}\mathcal H_{f_k}(\mathbf x,\mathbf y)
       \omega(s,\mathbf x)\omega(s,\mathbf y)
       \,\mathrm d\mathbf x\,\mathrm d\mathbf y\,\mathrm ds.
 \end{aligned}
\end{equation}
\end{lemma}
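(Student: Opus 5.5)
The plan is to derive everything from \eqref{eq:weak-formulation} by testing against radial truncations of the polynomial weights and passing to the limit. The argument proceeds by induction on $k$, because no a priori bound on $\operatorname{supp}\omega(t,\cdot)$ is available. Fix a smooth nonincreasing $\psi:[0,\infty)\to[0,1]$ with $\psi=1$ on $[0,1]$ and $\psi=0$ on $[2,\infty)$. For $k\geq0$ and $\rho\geq1$ set $\varphi_{k,\rho}(\mathbf x)=|\mathbf x|^{2k}\psi(|\mathbf x|^2/\rho^2)$. This lies in $C_c^\infty$ (it is a polynomial times a smooth function of $|\mathbf x|^2$), is nonnegative, and increases pointwise in $\rho$ to $f_k$ (with $f_0\equiv1$).

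\textbf{Step 1: a uniform kernel bound.} The key estimate, which I expect to be the main technical point, is
\[
 |\mathcal H_{\varphi_{k,\rho}}(\mathbf x,\mathbf y)|\leq C_k\bigl(|\mathbf x|^{2k-2}+|\mathbf y|^{2k-2}\bigr)\quad(k\geq1),\qquad
 |\mathcal H_{\varphi_{0,\rho}}(\mathbf x,\mathbf y)|\leq C\rho^{-2},
\]
with $C_k$ independent of $\rho$. To prove it, write $\nabla\varphi(\mathbf x)-\nabla\varphi(\mathbf y)=\int_0^1D^2\varphi(\mathbf y+\theta(\mathbf x-\mathbf y))(\mathbf x-\mathbf y)\,\mathrm d\theta$. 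On $\{|\mathbf z|^2\leq2\rho^2\}$ each derivative falling on $\psi(|\mathbf z|^2/\rho^2)$ costs a factor $|\mathbf z|/\rho^2\lesssim|\mathbf z|^{-1}$, so $|D^2\varphi_{k,\rho}(\mathbf z)|\leq C_k|\mathbf z|^{2k-2}$. For $k=0$ the bound is instead $C\rho^{-2}$. Along the segment we have $|\mathbf z|\leq|\mathbf x|+|\mathbf y|$, which gives the claim. The same computation gives $|\mathcal H_{f_k}|\leq C_k(|\mathbf x|^{2k-2}+|\mathbf y|^{2k-2})$. Moreover, $\mathcal H_{\varphi_{k,\rho}}\to\mathcal H_{f_k}$ pointwise off the diagonal, since $\nabla\varphi_{k,\rho}=\nabla f_k$ on $B_\rho(0)$.

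\textbf{Step 2: mass.} Take $k=0$. By Step 1 the right-hand side of \eqref{eq:weak-formulation} for $\varphi_{0,\rho}$ is at most $C\rho^{-2}t\sup_s\|\omega(s)\|_{L^1}^2$. This supremum is finite on bounded intervals because weakly continuous $L^1$-valued maps are norm bounded on compacts by uniform boundedness. Letting $\rho\to\infty$ and using monotone convergence on the left, with $\omega\geq0$, gives $\int\omega(t)=\Gamma$.

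\textbf{Step 3: induction on $k$.} Assume $M_{k-1}$ is bounded on each $[0,T]$ (this is Step 2 when $k=1$). Applying \eqref{eq:weak-formulation} to $\varphi_{k,\rho}$ and using Step 1 gives
\[
 \int\varphi_{k,\rho}\,\omega(t)\leq M_k(0)+2C_k\Gamma\int_0^tM_{k-1}(s)\,\mathrm ds .
\]
Here $M_k(0)<\infty$ because $\omega_0$ has compact support. Monotone convergence in $\rho$ shows that $M_k$ is finite and locally bounded. The bound $C_k(|\mathbf x|^{2k-2}+|\mathbf y|^{2k-2})$ is integrable against $\omega(s,\mathbf x)\omega(s,\mathbf y)\,\mathrm d\mathbf x\,\mathrm d\mathbf y\,\mathrm ds$ on $[0,t]$, with integral at most $2C_k\Gamma\int_0^tM_{k-1}$. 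Dominated convergence therefore passes $\rho\to\infty$ on the right-hand side and yields \eqref{eq:polynomial-identity}. The integrand in $s$ is $L^1_{\mathrm{loc}}$, so $M_k$ is locally absolutely continuous. Measurability in $s$ follows from weak continuity, approximating the kernel by bounded continuous ones.

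\textbf{Step 4: center and inertia.} Apply the same truncation to $\varphi=x_i$ and to $\varphi=|\mathbf x|^2$, or simply to $f_1$. For linear $\varphi$ the difference $\nabla\varphi(\mathbf x)-\nabla\varphi(\mathbf y)$ vanishes. For $|\mathbf x|^2$ it equals $2(\mathbf x-\mathbf y)$, which is orthogonal to $(\mathbf x-\mathbf y)^\perp$. So the limiting kernels are identically zero, while the truncated kernels are dominated by a constant ($k=1$ in Step 1). Hence $\int\mathbf x\,\omega(t)=\Gamma\mathbf c$ and $M_1(t)=M_1(0)$. Expanding $|\mathbf x-\mathbf c|^2=|\mathbf x|^2-2\mathbf c\cdot\mathbf x+|\mathbf c|^2$ and using the three conserved quantities then gives conservation of $J$.
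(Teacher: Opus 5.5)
Your proposal is correct and follows essentially the same route as the paper. Both arguments use cutoff test functions whose Hessians are bounded uniformly in the cutoff, giving $|\mathcal H_{\varphi_{k,\rho}}|\leq C_k(|\mathbf x|^{2k-2}+|\mathbf y|^{2k-2})$; both induct on $k$ to get finiteness of the moments, use dominated convergence to obtain \eqref{eq:polynomial-identity}, and handle the center and inertia with truncated linear test functions together with $\mathcal H_{f_1}=0$. The differences are cosmetic: your nonincreasing radial cutoff gives monotone convergence where the paper uses Fatou's lemma, and your left-hand limit for $x_i$ silently uses $\int|\mathbf x|\,\omega(t)<\infty$, which follows from $M_1(t)<\infty$ just as in the paper's Cauchy--Schwarz step.
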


\begin{proof}
Fix $T>0$. Weak continuity in $L^1$ and the uniform boundedness
principle imply
\[
 G_T:=\sup_{0\leq t\leq T}\|\omega(t,\cdot)\|_{L^1}<\infty.
\]
Choose $\chi\in C_c^\infty(\mathbb R^2)$ such that
$0\leq\chi\leq1$, $\chi=1$ on $B_1(\mathbf0)$, and
$\chi=0$ outside $B_2(\mathbf0)$. For $R\geq1$, put
$\chi_R(\mathbf x)=\chi(\mathbf x/R)$. Since
\[
 \|D^2\chi_R\|_{L^\infty}=R^{-2}\|D^2\chi\|_{L^\infty},
\]
identity \eqref{eq:weak-formulation} implies
\[
 \begin{aligned}
 &\left|\int_{\mathbb R^2}\chi_R(\mathbf x)\omega(t,\mathbf x)\,\mathrm d\mathbf x
       -\int_{\mathbb R^2}\chi_R(\mathbf x)\omega_0(\mathbf x)\,\mathrm d\mathbf x\right|\leq\frac{TG_T^2}{4\pi R^2}\|D^2\chi\|_{L^\infty}.
 \end{aligned}
\]
Letting $R\to\infty$ and using dominated convergence on the left,
we obtain
\begin{equation}\label{eq:circulation}
 \int_{\mathbb R^2}\omega(t,\mathbf x)\,\mathrm d\mathbf x=\Gamma,
 \qquad 0\leq t\leq T.
\end{equation}

We next show inductively that $M_k$ is bounded on $[0,T]$ for every
$k$. The case $k=0$ follows from \eqref{eq:circulation}.
For $k\geq1$, set $\varphi_{k,R}=f_k\chi_R$.
Direct computation implies that
\[
 \begin{aligned}
 D^2\varphi_{k,R}
 &=\chi_R D^2f_k
   +R^{-1}\bigl(\nabla f_k\otimes\nabla\chi(\mathbf x/R)
              +\nabla\chi(\mathbf x/R)\otimes\nabla f_k\bigr)+R^{-2}f_kD^2\chi(\mathbf x/R).
 \end{aligned}
\]
Here $\otimes$ denotes the tensor product. The derivatives of $\chi$
in the last two terms vanish unless $R\leq|\mathbf x|\leq2R$.
Using $|\nabla f_k|=2k|\mathbf x|^{2k-1}$ and
$|D^2f_k|\leq2k(2k-1)|\mathbf x|^{2k-2}$, we obtain
\begin{equation}\label{eq:cutoff-hessian}
 |D^2\varphi_{k,R}(\mathbf x)|
 \leq C_k|\mathbf x|^{2k-2},
\end{equation}
where $C_k$ is independent of $R$. For $k=1$, the factor on the
right is interpreted as $1$. For $\mathbf x\neq\mathbf y$,
\[
 \nabla\varphi_{k,R}(\mathbf x)-\nabla\varphi_{k,R}(\mathbf y)
 =\int_0^1D^2\varphi_{k,R}((1-\theta)\mathbf y+\theta\mathbf x)
                      (\mathbf x-\mathbf y)\,\mathrm d\theta.
\]
Since $|(1-\theta)\mathbf y+\theta\mathbf x|
\leq\max\{|\mathbf x|,|\mathbf y|\}$, it follows that
\begin{equation}\label{eq:cutoff-kernel}
 |\mathcal H_{\varphi_{k,R}}(\mathbf x,\mathbf y)|
 \leq C_k\bigl(|\mathbf x|^{2k-2}+|\mathbf y|^{2k-2}\bigr).
\end{equation}
Consequently, \eqref{eq:weak-formulation} and the induction hypothesis
give, for every $0\leq t\leq T$,
\[
 0\leq\int_{\mathbb R^2}\varphi_{k,R}(\mathbf x)\omega(t,\mathbf x)\,\mathrm d\mathbf x
 \leq M_k(0)+2C_k\Gamma\int_0^tM_{k-1}(s)\,\mathrm ds.
\]
Fatou's lemma as $R\to\infty$ yields
\begin{equation}\label{eq:preliminary-moments}
 M_k(t)\leq M_k(0)+2C_k\Gamma\int_0^tM_{k-1}(s)\,\mathrm ds<\infty.
\end{equation}
The right-hand side is bounded on $[0,T]$, completing the induction.

We may now pass to the limit in \eqref{eq:weak-formulation} with
$\varphi=\varphi_{k,R}$. The left-hand side converges by the finiteness
of $M_k(t)$ and $M_k(0)$. The integrand on the right converges away
from the diagonal and, by \eqref{eq:cutoff-kernel}, is dominated by
an integrable function, since
\[
 \begin{aligned}
 &\int_0^T\iint_{\mathbb R^2\times\mathbb R^2}
 \bigl(|\mathbf x|^{2k-2}+|\mathbf y|^{2k-2}\bigr)
 \omega(s,\mathbf x)\omega(s,\mathbf y)
 \,\mathrm d\mathbf x\,\mathrm d\mathbf y\,\mathrm ds=2\Gamma\int_0^TM_{k-1}(s)\,\mathrm ds<\infty.
 \end{aligned}
\]
Thus \eqref{eq:polynomial-identity} holds for every $t\in[0,T]$
and proves local absolute continuity of $M_k$ for $k\geq1$.
For $k=1$, we have
\[
 \mathcal H_{f_1}(\mathbf x,\mathbf y)
 =\frac{2(\mathbf x-\mathbf y)\cdot(\mathbf x-\mathbf y)^\perp}
             {4\pi|\mathbf x-\mathbf y|^2}=0,
\]
so $M_1(t)=M_1(0)$.

For the first moments, use
$\psi_{i,R}(\mathbf x)=x_i\chi_R(\mathbf x)$, $i=1,2$.
If $\mathbf e_i$ is the $i$th coordinate vector, then
\[
 D^2\psi_{i,R}
 =R^{-1}\bigl(\mathbf e_i\otimes\nabla\chi(\mathbf x/R)
            +\nabla\chi(\mathbf x/R)\otimes\mathbf e_i\bigr)
   +R^{-2}x_iD^2\chi(\mathbf x/R).
\]
Thus $\|D^2\psi_{i,R}\|_{L^\infty}\leq C/R$ and
\[
 \begin{aligned}
 &\left|\int_{\mathbb R^2}\psi_{i,R}(\mathbf x)\omega(t,\mathbf x)\,\mathrm d\mathbf x
       -\int_{\mathbb R^2}\psi_{i,R}(\mathbf x)\omega_0(\mathbf x)\,\mathrm d\mathbf x\right|\leq\frac{CT\Gamma^2}{R}.
 \end{aligned}
\]
Cauchy--Schwarz gives
\[
 \int_{\mathbb R^2}|\mathbf x|\omega(t,\mathbf x)\,\mathrm d\mathbf x
 \leq\Gamma^{1/2}M_1(t)^{1/2}<\infty.
\]
Dominated convergence as $R\to\infty$ therefore yields
\[
 \int_{\mathbb R^2}\mathbf x\omega(t,\mathbf x)\,\mathrm d\mathbf x=\Gamma\mathbf c.
\]
Together with conservation of $M_1$ and $\Gamma$, this gives
\[
 \begin{aligned}
 &\int_{\mathbb R^2}|\mathbf x-\mathbf c|^2\omega(t,\mathbf x)\,\mathrm d\mathbf x=M_1(t)-2\mathbf c\cdot\int_{\mathbb R^2}\mathbf x\omega(t,\mathbf x)
                         \,\mathrm d\mathbf x+\Gamma|\mathbf c|^2=M_1(0)-\Gamma|\mathbf c|^2=J.
 \end{aligned}
\]
Since $T$ was arbitrary, all these conclusions hold on $[0,\infty)$.
\end{proof}

By translation invariance, we henceforth assume $\mathbf c=\mathbf0$.
The moments in Lemma~\ref{lem:conservation} remain finite in the
translated coordinates, as follows
from $|\mathbf x-\mathbf c|^{2k}
\leq2^{2k-1}(|\mathbf x|^{2k}+|\mathbf c|^{2k})$ for $k\geq1$.
The first-moment conservation in Lemma~\ref{lem:conservation} gives
\begin{equation}\label{eq:zero-first-moment}
 \int_{\mathbb R^2}\mathbf x\omega(t,\mathbf x)\,\mathrm d\mathbf x=\mathbf0,
 \qquad t\geq0.
\end{equation}
We use the moments $H_n$ in \eqref{eq:moments}.
By Lemma~\ref{lem:conservation} and the initial support condition,
\begin{equation}\label{eq:initial-moments}
 H_0(t)=J,\qquad H_n(0)\leq JR_0^{4n},\qquad n\geq1.
\end{equation}
Applying Lemma~\ref{lem:conservation} in these coordinates with
$k=2n+1$ shows that $H_n$ is locally absolutely continuous and that
\begin{equation}\label{eq:moment-derivative}
 H_n'(t)=\iint_{\mathbb R^2\times\mathbb R^2}\mathcal H_{f_{2n+1}}(\mathbf x,\mathbf y)
                   \omega(t,\mathbf x)\omega(t,\mathbf y)
                   \,\mathrm d\mathbf x\,\mathrm d\mathbf y
\end{equation}
for almost every $t\geq0$. 

\subsection{The interaction estimate}\label{subsec:interaction}

We first record the algebraic estimate used in the moment inequality.

\begin{lemma}\label{lem:kernel}
Let $n\geq1$ be an integer and let $\mathbf x,\mathbf y\in\mathbb R^2$
with $\mathbf x\neq\mathbf y$. Write
$[\mathbf x,\mathbf y]=x_1y_2-x_2y_1$ and define
\begin{equation}\label{eq:polynomial}
 P_n(\mathbf x,\mathbf y)
 =\sum_{j=0}^{n-1}|\mathbf x|^{4j}|\mathbf y|^{4n-4-4j}
\end{equation}
and
\begin{equation}\label{eq:kernel}
 \mathcal K_n(\mathbf x,\mathbf y)
 =\left\{\frac{|\mathbf x|^{4n}-|\mathbf y|^{4n}}
                    {|\mathbf x-\mathbf y|^2}
           -\bigl(|\mathbf x|^{4n-2}-|\mathbf y|^{4n-2}\bigr)\right\}
      [\mathbf x,\mathbf y].
\end{equation}
Then
\begin{equation}\label{eq:kernel-factorization}
 \begin{aligned}
 \mathcal K_n(\mathbf x,\mathbf y)
 &=P_n(\mathbf x,\mathbf y)\mathcal K_1(\mathbf x,\mathbf y)-\frac{|\mathbf x|^2|\mathbf y|^2}{|\mathbf x|^2+|\mathbf y|^2}
       \bigl(|\mathbf x|^{4n-4}-|\mathbf y|^{4n-4}\bigr)
       [\mathbf x,\mathbf y]
 \end{aligned}
\end{equation}
and
\begin{equation}\label{eq:kernel-bound}
 |\mathcal K_n(\mathbf x,\mathbf y)|
 \leq2|\mathbf x|^2|\mathbf y|^2P_n(\mathbf x,\mathbf y).
\end{equation}
\end{lemma}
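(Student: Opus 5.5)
The plan is to prove the identity \eqref{eq:kernel-factorization} by pure algebra in the radial variables, and then to deduce \eqref{eq:kernel-bound} from two separate scalar estimates.

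\textbf{Factorization.} Write $a=|\mathbf x|^2$, $b=|\mathbf y|^2$ and $d=|\mathbf x-\mathbf y|^2>0$. Note that $a+b>0$ because $\mathbf x\neq\mathbf y$. In these variables $P_n=\sum_{j=0}^{n-1}a^{2j}b^{2(n-1-j)}$, so that
\[
 (a^2-b^2)P_n=a^{2n}-b^{2n},\qquad (a-b)P_n=\frac{a^{2n}-b^{2n}}{a+b}.
\]
The first identity gives $(a^{2n}-b^{2n})/d=P_n(a^2-b^2)/d$. Hence
\[
 \mathcal K_n-P_n\mathcal K_1=\bigl[P_n(a-b)-(a^{2n-1}-b^{2n-1})\bigr][\mathbf x,\mathbf y].
\]
Multiplying the bracket by $a+b$ gives
\[
 a^{2n}-b^{2n}-(a+b)(a^{2n-1}-b^{2n-1})=ab^{2n-1}-ba^{2n-1}=-ab\,(a^{2n-2}-b^{2n-2}).
\]
Dividing back by $a+b$ yields \eqref{eq:kernel-factorization}. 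For $n=1$ the correction term vanishes, consistently.

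\textbf{Bound: rewriting the two pieces.} I would first put both terms in product form. For $\mathcal K_1$, use $a+b-d=2\,\mathbf x\cdot\mathbf y$ to get
\[
 \mathcal K_1=\frac{2(a-b)(\mathbf x\cdot\mathbf y)[\mathbf x,\mathbf y]}{d}.
\]
For the correction term, use $a^{2n-2}-b^{2n-2}=(a^2-b^2)P_{n-1}$ (with $P_0=0$) to write it as
\[
 ab\,(a-b)P_{n-1}[\mathbf x,\mathbf y].
\]
It then suffices to prove separately $|\mathcal K_1|\le ab$ and $|\text{correction}|\le abP_n$.

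\textbf{Bound: the correction term.} The key observation is that $P_n=a^{2n-2}+b^2P_{n-1}=b^{2n-2}+a^2P_{n-1}$. Hence $P_n\ge\max(a,b)^2P_{n-1}$. Combining this with $|a-b|\le\max(a,b)$ and $|[\mathbf x,\mathbf y]|\le\sqrt{ab}\le\max(a,b)$ gives
\[
 ab\,|a-b|\,P_{n-1}\,|[\mathbf x,\mathbf y]|\le abP_n.
\]
This piece uses no information on the angle.

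\textbf{Bound: the main obstacle, $|\mathcal K_1|\le ab$.} Put $r=|\mathbf x|$, $s=|\mathbf y|$ and let $\theta$ be the angle between $\mathbf x$ and $\mathbf y$. Then $|\mathcal K_1|=r^2s^2\,|r^2-s^2|\,|\sin2\theta|/d$, so the required inequality reads
\[
 |r^2-s^2|\,|\sin 2\theta|\le d=r^2+s^2-2rs\cos\theta.
\]
This is a scalar inequality in two variables, and I expect it to be the step needing real care, especially when $r\approx s$ and $\theta$ is small. I would attack it directly:
\begin{itemize}
\item By symmetry and homogeneity, assume $\cos\theta\ge0$ (otherwise $d\ge r^2+s^2\ge|r^2-s^2|$ trivially).
\item Bound $|\sin2\theta|\le 2|\sin\theta|$, reducing to $2|r^2-s^2|\,|\sin\theta|\le d$.
\item View $d=|\mathbf x-\mathbf y|^2$ as the squared side of a triangle. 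Compare it with $(r-s)^2+4rs\sin^2(\theta/2)$ via AM--GM.
\item Failing a clean proof that way, minimize $d-|r^2-s^2||\sin2\theta|$ over $\theta$ for fixed ratio $r/s$, treating it as a quadratic form in $(r,s)$ with trigonometric coefficients.
\end{itemize}
If the constant $1$ turns out not to be achievable, the total constant $2$ in \eqref{eq:kernel-bound} leaves room to redistribute slack between the two pieces.
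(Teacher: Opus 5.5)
Your factorization is correct and is essentially the paper's computation. Your bound $|\text{correction}|\le abP_n$ via $P_n\ge\max(a,b)^2P_{n-1}$ is also valid; the paper gets the constant $\tfrac12$ there.

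The gap is in the step you flagged. The inequality $|\mathcal K_1|\le ab$, i.e.\ $|r^2-s^2|\,|\sin2\theta|\le d$, is false, so no amount of care will prove it. Your intermediate target $2|r^2-s^2|\,|\sin\theta|\le d$ is false as well. Take $\cos\theta=2rs/(r^2+s^2)$, so that $|\sin\theta|=|r^2-s^2|/(r^2+s^2)$ and $d=(r^2-s^2)^2/(r^2+s^2)$. Then
\[
 |\mathcal K_1(\mathbf x,\mathbf y)|=2ab\cos\theta=\frac{4rs}{r^2+s^2}\,ab,
\]
which tends to $2ab$ as $r/s\to1$, exactly in the regime you worried about. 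For $r=1.1$, $s=1$ it is about $1.991\,ab$. The correct sharp statement is $|r^2-s^2|\,|\sin\theta|\le d$, your reduced inequality without the factor $2$. It follows from the identity $ab\,d^2-(a-b)^2[\mathbf x,\mathbf y]^2=\bigl((a+b)\,\mathbf x\cdot\mathbf y-2ab\bigr)^2$ and yields only $|\mathcal K_1|\le2|\mathbf x||\mathbf y||\mathbf x\cdot\mathbf y|\le2ab$.

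Your fallback of redistributing slack cannot work either. Any argument that bounds $|\mathcal K_n|$ by $P_n|\mathcal K_1|+|\text{correction}|$ fails for $n\ge3$. At the same angle with $n=3$, $r=1.1$, $s=1$, one finds $P_3|\mathcal K_1|+|\text{correction}|\approx2.0027\,abP_3$. More generally, for $r=(1+\epsilon)s$ at that angle the sum equals $\bigl(2+\frac{n-2}{n}\epsilon^2+O(\epsilon^3)\bigr)abP_n$.

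The missing idea is a sign cancellation between the two pieces. Write $\mathcal K_n=[\mathbf x,\mathbf y](\alpha-\beta)$ with
\[
 \alpha=\frac{2(a-b)(\mathbf x\cdot\mathbf y)P_n}{d},\qquad
 \beta=ab(a-b)P_{n-1}.
\]
If $\mathbf x\cdot\mathbf y\ge0$, then $\alpha$ and $\beta$ both have the sign of $a-b$, so $|\alpha-\beta|\le\max\{|\alpha|,|\beta|\}$. Hence $|\mathcal K_n|\le\max\{P_n|\mathcal K_1|,|\text{correction}|\}\le2abP_n$, using the sharp bound $|\mathcal K_1|\le2ab$ together with your correction bound. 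If $\mathbf x\cdot\mathbf y<0$, your observation $d\ge a+b$ does give $|\mathcal K_1|\le ab$, and the triangle inequality with your two bounds closes with constant exactly $2$. This case split on the sign of $\mathbf x\cdot\mathbf y$, with a maximum instead of a sum in the acute case, is exactly the paper's argument.
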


\begin{proof}
Direct computation yields
\[
 \begin{aligned}
 |\mathbf x-\mathbf y|^2
 &=|\mathbf x|^2+|\mathbf y|^2-2\mathbf x\cdot\mathbf y,\qquad
 [\mathbf x,\mathbf y]^2
 =|\mathbf x|^2|\mathbf y|^2-(\mathbf x\cdot\mathbf y)^2.
 \end{aligned}
\]
For $n=1$, we have
\begin{equation}\label{K1 expression}
    \begin{aligned}
 \mathcal K_1(\mathbf x,\mathbf y)
 &=\bigl(|\mathbf x|^2-|\mathbf y|^2\bigr)
   \left(\frac{|\mathbf x|^2+|\mathbf y|^2}{|\mathbf x-\mathbf y|^2}-1\right)
   [\mathbf x,\mathbf y]=\frac{2\bigl(|\mathbf x|^2-|\mathbf y|^2\bigr)
             (\mathbf x\cdot\mathbf y)[\mathbf x,\mathbf y]}
            {|\mathbf x-\mathbf y|^2}.
 \end{aligned}
\end{equation}
 
Moreover,
\[
 \begin{aligned}
 &|\mathbf x|^2|\mathbf y|^2|\mathbf x-\mathbf y|^4
   -\bigl(|\mathbf x|^2-|\mathbf y|^2\bigr)^2[\mathbf x,\mathbf y]^2\\
 &\quad=|\mathbf x|^2|\mathbf y|^2
     \bigl(|\mathbf x|^2+|\mathbf y|^2-2\mathbf x\cdot\mathbf y\bigr)^2-\bigl(|\mathbf x|^2-|\mathbf y|^2\bigr)^2
     \bigl(|\mathbf x|^2|\mathbf y|^2-(\mathbf x\cdot\mathbf y)^2\bigr)\\
 &\quad=\bigl(|\mathbf x|^2+|\mathbf y|^2\bigr)^2
               (\mathbf x\cdot\mathbf y)^2-4|\mathbf x|^2|\mathbf y|^2
       \bigl(|\mathbf x|^2+|\mathbf y|^2\bigr)(\mathbf x\cdot\mathbf y)
       +4|\mathbf x|^4|\mathbf y|^4\\
 &\quad=\left(\bigl(|\mathbf x|^2+|\mathbf y|^2\bigr)
                     (\mathbf x\cdot\mathbf y)
                    -2|\mathbf x|^2|\mathbf y|^2\right)^2\\ &\quad\geq0.
 \end{aligned}
\]
Consequently,
\[
 \bigl||\mathbf x|^2-|\mathbf y|^2\bigr|\,
       |[\mathbf x,\mathbf y]|
 \leq|\mathbf x||\mathbf y||\mathbf x-\mathbf y|^2,
\]
and hence
\begin{equation}\label{eq:first-kernel-bound}
 |\mathcal K_1(\mathbf x,\mathbf y)|
 \leq2|\mathbf x||\mathbf y||\mathbf x\cdot\mathbf y|
 \leq2|\mathbf x|^2|\mathbf y|^2.
\end{equation}

By cancellation of successive terms in \eqref{eq:polynomial},
\begin{equation}\label{the first identity}
     \begin{aligned}
 &\bigl(|\mathbf x|^4-|\mathbf y|^4\bigr)P_n(\mathbf x,\mathbf y)=\sum_{j=0}^{n-1}
    \left(|\mathbf x|^{4j+4}|\mathbf y|^{4n-4-4j}
          -|\mathbf x|^{4j}|\mathbf y|^{4n-4j}\right)=|\mathbf x|^{4n}-|\mathbf y|^{4n}.
 \end{aligned}
\end{equation}

Furthermore, we obtain
\begin{equation}\label{the second identity}
    \begin{aligned}
 &\bigl(|\mathbf x|^2+|\mathbf y|^2\bigr)
   \left[\bigl(|\mathbf x|^2-|\mathbf y|^2\bigr)P_n(\mathbf x,\mathbf y)
              -\bigl(|\mathbf x|^{4n-2}-|\mathbf y|^{4n-2}\bigr)\right]\\
 &\quad=|\mathbf x|^{4n}-|\mathbf y|^{4n}
       -\bigl(|\mathbf x|^2+|\mathbf y|^2\bigr)
        \bigl(|\mathbf x|^{4n-2}-|\mathbf y|^{4n-2}\bigr)\\
 &\quad=-|\mathbf x|^2|\mathbf y|^2
         \bigl(|\mathbf x|^{4n-4}-|\mathbf y|^{4n-4}\bigr).
 \end{aligned}
\end{equation}
Note that \eqref{the first identity} cancels the terms with denominator
$|\mathbf x-\mathbf y|^2$ in
$\mathcal K_n(\mathbf x,\mathbf y)
 -P_n(\mathbf x,\mathbf y)\mathcal K_1(\mathbf x,\mathbf y)$, so that
\[
 \begin{aligned}
 &\mathcal K_n(\mathbf x,\mathbf y)
        -P_n(\mathbf x,\mathbf y)\mathcal K_1(\mathbf x,\mathbf y)=\big(\big(|\mathbf x|^2-|\mathbf y|^2\bigr)P_n(\mathbf x,\mathbf y)
              -\bigl(|\mathbf x|^{4n-2}-|\mathbf y|^{4n-2}\big)\big)
              [\mathbf x,\mathbf y].
 \end{aligned}
\]
Since $|\mathbf x|^2+|\mathbf y|^2>0$, using \eqref{the second identity} yields
\eqref{eq:kernel-factorization}. No division by
$|\mathbf x|^2-|\mathbf y|^2$ is involved, so the factorization also
holds when $|\mathbf x|=|\mathbf y|$.

It remains to prove \eqref{eq:kernel-bound}. If
$|\mathbf x||\mathbf y|=0$ or $|\mathbf x|=|\mathbf y|$, then
$\mathcal K_n(\mathbf x,\mathbf y)=0$. Otherwise,
$\mathcal K_n(\mathbf y,\mathbf x)=\mathcal K_n(\mathbf x,\mathbf y)$,
so exchanging $\mathbf x$ and $\mathbf y$ allows us to assume
$|\mathbf x|>|\mathbf y|>0$.
For $n=1$, the difference
$|\mathbf x|^{4n-4}-|\mathbf y|^{4n-4}$ is zero. For $n\geq2$,
the term $|\mathbf x|^{4n-4}$ occurs in $P_n(\mathbf x,\mathbf y)$,
and therefore
\[
 0\leq|\mathbf x|^{4n-4}-|\mathbf y|^{4n-4}
 \leq|\mathbf x|^{4n-4}\leq P_n(\mathbf x,\mathbf y).
\]
We also have
\[
 \frac{|[\mathbf x,\mathbf y]|}{|\mathbf x|^2+|\mathbf y|^2}
 \leq\frac{|\mathbf x||\mathbf y|}{|\mathbf x|^2+|\mathbf y|^2}
 \leq\frac12.
\]
The factorization can now be written as
\[
 \begin{aligned}
 \mathcal K_n(\mathbf x,\mathbf y)
 =[\mathbf x,\mathbf y]\Biggl(
 &\frac{2\bigl(|\mathbf x|^2-|\mathbf y|^2\bigr)
          (\mathbf x\cdot\mathbf y)P_n(\mathbf x,\mathbf y)}
         {|\mathbf x-\mathbf y|^2}-\frac{|\mathbf x|^2|\mathbf y|^2
          \bigl(|\mathbf x|^{4n-4}-|\mathbf y|^{4n-4}\bigr)}
         {|\mathbf x|^2+|\mathbf y|^2}\Biggr).
 \end{aligned}
\]
If $\mathbf x\cdot\mathbf y\geq0$, by \eqref{eq:first-kernel-bound}, we obtain
\[
 \begin{aligned}
 |\mathcal K_n(\mathbf x,\mathbf y)|
 &\leq\max\Biggl\{P_n(\mathbf x,\mathbf y)|\mathcal K_1(\mathbf x,\mathbf y)|,
       \frac{|\mathbf x|^2|\mathbf y|^2
              \bigl(|\mathbf x|^{4n-4}-|\mathbf y|^{4n-4}\bigr)
              |[\mathbf x,\mathbf y]|}
             {|\mathbf x|^2+|\mathbf y|^2}\Biggr\}\\
 &\leq\max\left\{2|\mathbf x|^2|\mathbf y|^2P_n(\mathbf x,\mathbf y),
                 \frac12|\mathbf x|^2|\mathbf y|^2P_n(\mathbf x,\mathbf y)\right\}\\
 &=2|\mathbf x|^2|\mathbf y|^2P_n(\mathbf x,\mathbf y).
 \end{aligned}
\]
If $\mathbf x\cdot\mathbf y<0$, then we have
\begin{equation*}
    |\mathbf x-\mathbf y|^2\geq|\mathbf x|^2+|\mathbf y|^2,
\end{equation*}
and
\begin{equation*}
    2|(\mathbf x\cdot\mathbf y)[\mathbf x,\mathbf y]|
 \leq(\mathbf x\cdot\mathbf y)^2+[\mathbf x,\mathbf y]^2
   =|\mathbf x|^2|\mathbf y|^2.
\end{equation*}
Thus, we obtain
\[
 \begin{aligned}
 |\mathcal K_n(\mathbf x,\mathbf y)|
 &\leq\frac{2\bigl(|\mathbf x|^2-|\mathbf y|^2\bigr)
                   |(\mathbf x\cdot\mathbf y)[\mathbf x,\mathbf y]|
                   P_n(\mathbf x,\mathbf y)}{|\mathbf x-\mathbf y|^2}+\frac{|\mathbf x|^2|\mathbf y|^2
                   \bigl(|\mathbf x|^{4n-4}-|\mathbf y|^{4n-4}\bigr)
                   |[\mathbf x,\mathbf y]|}
                  {|\mathbf x|^2+|\mathbf y|^2}\\
 &\leq\frac{|\mathbf x|^2-|\mathbf y|^2}{|\mathbf x|^2+|\mathbf y|^2}
           |\mathbf x|^2|\mathbf y|^2P_n(\mathbf x,\mathbf y)
        +\frac12|\mathbf x|^2|\mathbf y|^2P_n(\mathbf x,\mathbf y)\\
 &\leq\frac32|\mathbf x|^2|\mathbf y|^2P_n(\mathbf x,\mathbf y),
 \end{aligned}
\]
which proves \eqref{eq:kernel-bound} in all cases.
\end{proof}

\begin{proposition}\label{prop:moment-inequality}
For every integer $n\geq1$ and almost every $t\geq0$,
\begin{equation}\label{eq:convolution}
 |H_n'(t)|\leq\frac{2n+1}{\pi}
                   \sum_{j=0}^{n-1}H_j(t)H_{n-1-j}(t).
\end{equation}
\end{proposition}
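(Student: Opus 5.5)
Starting from the moment identity \eqref{eq:moment-derivative} with $f=f_{2n+1}$, $f(\mathbf x)=|\mathbf x|^{4n+2}$, I will rewrite the kernel $\mathcal H_f$ so that Lemma~\ref{lem:kernel} applies. Since $\nabla f(\mathbf x)=(4n+2)|\mathbf x|^{4n}\mathbf x$, and $\mathbf x\cdot(\mathbf x-\mathbf y)^\perp=-\mathbf x\cdot\mathbf y^\perp=[\mathbf x,\mathbf y]$ (up to a fixed sign convention) and likewise $\mathbf y\cdot(\mathbf x-\mathbf y)^\perp=[\mathbf x,\mathbf y]$ with the same sign, we get
\[
 \mathcal H_f(\mathbf x,\mathbf y)=\pm\frac{(4n+2)\bigl(|\mathbf x|^{4n}-|\mathbf y|^{4n}\bigr)[\mathbf x,\mathbf y]}{4\pi|\mathbf x-\mathbf y|^2}.
\]
This is the first term of $\mathcal K_n$, up to the factor $(2n+1)/(2\pi)$.

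The next step is to show that the subtracted term $(|\mathbf x|^{4n-2}-|\mathbf y|^{4n-2})[\mathbf x,\mathbf y]$ integrates to zero against $\omega(t,\mathbf x)\omega(t,\mathbf y)$. This is the first-moment cancellation. Expanding gives
\[
 |\mathbf x|^{4n-2}[\mathbf x,\mathbf y]-|\mathbf y|^{4n-2}[\mathbf x,\mathbf y].
\]
Integrating the first piece in $\mathbf y$ produces $[\mathbf x,\int\mathbf y\,\omega\,\mathrm d\mathbf y]=0$ by \eqref{eq:zero-first-moment}, and the second piece vanishes symmetrically. Integrability is fine: the integrand is bounded by $|\mathbf x|^{4n-1}|\mathbf y|+|\mathbf x||\mathbf y|^{4n-1}$, which is integrable because all moments are finite by Lemma~\ref{lem:conservation}. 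Fubini therefore applies to each piece separately, and
\[
 H_n'(t)=\pm\frac{2n+1}{2\pi}\iint\mathcal K_n(\mathbf x,\mathbf y)\,\omega(t,\mathbf x)\omega(t,\mathbf y)\,\mathrm d\mathbf x\,\mathrm d\mathbf y .
\]
The diagonal is negligible since $\mathcal H_f$ is bounded. The integrability of $\mathcal K_n$ itself then follows from \eqref{eq:kernel-bound}.

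To conclude, apply \eqref{eq:kernel-bound}, $|\mathcal K_n|\le 2|\mathbf x|^2|\mathbf y|^2P_n$. By Tonelli,
\[
 \iint|\mathbf x|^{4j+2}|\mathbf y|^{4(n-1-j)+2}\,\omega\,\omega=H_jH_{n-1-j}.
\]
This gives
\[
 |H_n'|\le\frac{2n+1}{2\pi}\cdot2\sum_{j=0}^{n-1}H_jH_{n-1-j},
\]
which is exactly \eqref{eq:convolution}.

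The main obstacle is bookkeeping rather than analysis. One must verify the sign and orientation of $[\mathbf x,\mathbf y]$ against $(\mathbf x-\mathbf y)^\perp$, keeping in mind that only the absolute value matters in the end. One must also justify splitting the integral into the singular-looking first term and the cancelling term. This is handled by noting that each split piece is separately absolutely integrable, so no principal-value argument is needed.
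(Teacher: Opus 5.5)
Your proposal is correct and follows the paper's proof essentially step by step. You rewrite $\mathcal H_{f_{2n+1}}$ using $\mathbf x\cdot(\mathbf x-\mathbf y)^\perp=\mathbf y\cdot(\mathbf x-\mathbf y)^\perp=[\mathbf x,\mathbf y]$; the sign is in fact exactly $+$, so the $\pm$ is unnecessary. You then subtract the two vanishing first-moment integrals to get $H_n'=\frac{2n+1}{2\pi}\iint\mathcal K_n\,\omega\omega$, and conclude with \eqref{eq:kernel-bound} and Fubini.

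Your explicit integrability check for the split pieces is a welcome addition. One small correction: the diagonal is negligible simply because it is a Lebesgue-null set in $\mathbb R^4$ and $\omega(t,\cdot)\in L^1$. It is not because $\mathcal H_{f_{2n+1}}$ is bounded, which it is not globally, since it grows like $|\mathbf x|^{4n}+|\mathbf y|^{4n}$.
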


\begin{proof}
We use the notation of Lemma~\ref{lem:kernel}.
The identity
$\mathbf x\cdot(\mathbf x-\mathbf y)^\perp=\mathbf y\cdot(\mathbf x-\mathbf y)^\perp=[\mathbf x,\mathbf y]$
implies that
\[
 \begin{aligned}
 &[\nabla f_{2n+1}(\mathbf x)-\nabla f_{2n+1}(\mathbf y)]
                    \cdot(\mathbf x-\mathbf y)^\perp=(4n+2)\bigl(|\mathbf x|^{4n}-|\mathbf y|^{4n}\bigr)
          [\mathbf x,\mathbf y].
 \end{aligned}
\]
Therefore, \eqref{eq:moment-derivative} can be rewritten as
\[
 \begin{aligned}
 H_n'(t)=\frac{4n+2}{4\pi}
 &\iint_{\mathbb R^2\times\mathbb R^2}
       \frac{|\mathbf x|^{4n}-|\mathbf y|^{4n}}
            {|\mathbf x-\mathbf y|^2}[\mathbf x,\mathbf y]\omega(t,\mathbf x)\omega(t,\mathbf y)
       \,\mathrm d\mathbf x\,\mathrm d\mathbf y.
 \end{aligned}
\]

By the first-moment conservation in Lemma~\ref{lem:conservation},
in the form \eqref{eq:zero-first-moment},
\[
 \begin{aligned}
 &\iint_{\mathbb R^2\times\mathbb R^2}
       |\mathbf x|^{4n-2}[\mathbf x,\mathbf y]
       \omega(t,\mathbf x)\omega(t,\mathbf y)
       \,\mathrm d\mathbf x\,\mathrm d\mathbf y=\left[
       \int_{\mathbb R^2}|\mathbf x|^{4n-2}\mathbf x\omega(t,\mathbf x)
                            \,\mathrm d\mathbf x,
       \int_{\mathbb R^2}\mathbf y\omega(t,\mathbf y)
                            \,\mathrm d\mathbf y
       \right]=0.
 \end{aligned}
\]
Similarly,
\[
 \begin{aligned}
 &\iint_{\mathbb R^2\times\mathbb R^2}
       |\mathbf y|^{4n-2}[\mathbf x,\mathbf y]
       \omega(t,\mathbf x)\omega(t,\mathbf y)
       \,\mathrm d\mathbf x\,\mathrm d\mathbf y=\left[
       \int_{\mathbb R^2}\mathbf x\omega(t,\mathbf x)
                            \,\mathrm d\mathbf x,
       \int_{\mathbb R^2}|\mathbf y|^{4n-2}\mathbf y\omega(t,\mathbf y)
                            \,\mathrm d\mathbf y
       \right]=0.
 \end{aligned}
\]
Subtracting these zero integrals, we obtain
\begin{equation}\label{eq:kernel-identity}
 \begin{aligned}
 H_n'(t)=\frac{4n+2}{4\pi}
 &\iint_{\mathbb R^2\times\mathbb R^2}
 \mathcal K_n(\mathbf x,\mathbf y)\omega(t,\mathbf x)\omega(t,\mathbf y)
 \,\mathrm d\mathbf x\,\mathrm d\mathbf y.
 \end{aligned}
\end{equation}

Nonnegativity of $\omega(t,\cdot)$ and Lemma~\ref{lem:kernel} now imply that
\[
 \begin{aligned}
 |H_n'(t)|\leq\frac{2n+1}{\pi}
 &\iint_{\mathbb R^2\times\mathbb R^2}
       |\mathbf x|^2|\mathbf y|^2P_n(\mathbf x,\mathbf y)\omega(t,\mathbf x)\omega(t,\mathbf y)
       \,\mathrm d\mathbf x\,\mathrm d\mathbf y.
 \end{aligned}
\]
Finally, Fubini's theorem and the definition of the moments yield
\[
 \begin{aligned}
 &\iint_{\mathbb R^2\times\mathbb R^2}
       |\mathbf x|^2|\mathbf y|^2P_n(\mathbf x,\mathbf y)
       \omega(t,\mathbf x)\omega(t,\mathbf y)
                         \,\mathrm d\mathbf x\,\mathrm d\mathbf y\\
 &\quad=\sum_{j=0}^{n-1}
 \left(\int_{\mathbb R^2}|\mathbf x|^{4j+2}\omega(t,\mathbf x)
                                     \,\mathrm d\mathbf x\right)
 \left(\int_{\mathbb R^2}|\mathbf y|^{4(n-1-j)+2}\omega(t,\mathbf y)
                                     \,\mathrm d\mathbf y\right)\\
 &\quad=\sum_{j=0}^{n-1}H_j(t)H_{n-1-j}(t),
 \end{aligned}
\]
which proves \eqref{eq:convolution}.
\end{proof}

\section{Confinement of the vorticity support}\label{sec:confinement}

In this section, we proceed to the proof of Theorem~\ref{thm:main}. Without loss of generality, we continue to assume that $\mathbf c=\mathbf0$.

\begin{proof}[Proof of Theorem~\ref{thm:main}]
For each integer $N\geq1$, define
\begin{equation}\label{eq:finite-polynomial}
 F_N(t,z)=\frac1J\sum_{n=0}^NH_n(t)z^n,
 \qquad t\geq0,\quad z\geq0.
\end{equation}
Its coefficients are nonnegative and, by Lemma~\ref{lem:conservation},
locally absolutely continuous in time. Write
\[
 F_N(t,z)^2=\sum_{m=0}^{2N}b_m(t)z^m.
\]
Then $b_m(t)\geq0$, and for $0\leq m\leq N-1$,
\[
 b_m(t)=\frac1{J^2}\sum_{j=0}^mH_j(t)H_{m-j}(t).
\]
By Proposition~\ref{prop:moment-inequality}, for almost every $t$
and every $z\geq0$,
\begin{equation}\label{eq:polynomial-inequality}
 \begin{aligned}
 \partial_tF_N(t,z)
 &\leq\frac J\pi\sum_{n=1}^N(2n+1)b_{n-1}(t)z^n\\
 &\leq\frac J\pi\sum_{m=0}^{2N}(2m+3)b_m(t)z^{m+1}\\
 &=\frac J\pi\left(3zF_N(t,z)^2
                +2z^2\partial_z\bigl(F_N(t,z)^2\bigr)\right)\\
 &=\frac J\pi\left(3zF_N(t,z)^2
                +4z^2F_N(t,z)\partial_zF_N(t,z)\right).
 \end{aligned}
\end{equation}

Fix $\delta>0$ and define
\begin{equation}\label{eq:comparison-functions}
 \begin{aligned}
 z(t):&=\frac1{R_0^4+\delta}
       \left(1+\frac{Jt}{\pi\delta}\right)^{-4},\qquad
 A(t):=\frac{R_0^4+\delta}{\delta}
       \left(1+\frac{Jt}{\pi\delta}\right)^3.
 \end{aligned}
\end{equation}
Both $A$ and $z$ are positive, and direct differentiation yields
\begin{equation}\label{eq:comparison-odes}
 \begin{aligned}
 z'(t)&=-\frac{4J}{\pi\delta(R_0^4+\delta)}
        \left(1+\frac{Jt}{\pi\delta}\right)^{-5}
       =-\frac{4J}{\pi}z(t)^2A(t),\\
 A'(t)&=\frac{3J(R_0^4+\delta)}{\pi\delta^2}
        \left(1+\frac{Jt}{\pi\delta}\right)^2
       =\frac{3J}{\pi}z(t)A(t)^2.
 \end{aligned}
\end{equation}
Moreover, from \eqref{eq:initial-moments}, we obtain
\[
 \begin{aligned}
 F_N(0,z(0))
 &\leq\sum_{n=0}^N\left(\frac{R_0^4}{R_0^4+\delta}\right)^n\leq\frac1{1-R_0^4/(R_0^4+\delta)}
 =\frac{R_0^4+\delta}{\delta}=A(0).
 \end{aligned}
\]
Let
\[
 W_N(t)=F_N(t,z(t))-A(t).
\]
Combining \eqref{eq:polynomial-inequality}
and \eqref{eq:comparison-odes}, we have
\[
 \begin{aligned}
 W_N'
 &=\partial_tF_N+z'\partial_zF_N-A'\\
 &\leq\frac J\pi\left[
     3z(F_N^2-A^2)+4z^2(F_N-A)\partial_zF_N\right]\\
 &=\frac J\pi\left[3z(F_N+A)+4z^2\partial_zF_N\right]W_N,
 \end{aligned}
\]
where $F_N$ and its partial derivatives are evaluated at
$(t,z(t))$. Set
\[
 g_N(t)=\frac J\pi\left[
  3z(t)\bigl(F_N(t,z(t))+A(t)\bigr)
  +4z(t)^2\partial_zF_N(t,z(t))\right].
\]
For each fixed $N$, all moments involved are bounded on compact
time intervals, so $g_N$ is locally integrable. Multiplying
$W_N'\leq g_NW_N$ by the integrating factor, we obtain
\[
 \frac{\mathrm d}{\mathrm dt}
 \left[\exp\left(-\int_0^tg_N(s)\,\mathrm ds\right)W_N(t)\right]
 =\exp\left(-\int_0^tg_N(s)\,\mathrm ds\right)
       (W_N'-g_NW_N)\leq0
\]
for almost every $t\geq0$. Since $W_N(0)\leq0$ and $W_N$ is locally
absolutely continuous, integration proves
\begin{equation}\label{eq:polynomial-bound}
 F_N(t,z(t))\leq A(t),
 \qquad N\geq1,\quad t\geq0.
\end{equation}
Taking $N=n$ and using nonnegativity of the coefficients gives
\begin{equation}\label{eq:moment-bound}
 H_n(t)\leq JA(t)z(t)^{-n},
 \qquad n\geq1,\quad t\geq0.
\end{equation}

For every $n$, the integrated estimate \eqref{eq:moment-bound}
holds for all $t\geq0$. We now fix $t$ and $\delta$, and only then
let $n\to\infty$. Choose $r>0$ such that $r^4z(t)>1$.
Then for every $n\geq1$, we obtain
\[
 \begin{aligned}
 0\leq\int_{\mathbb R^2\setminus\overline B_r(\mathbf0)}|\mathbf x|^2\omega(t,\mathbf x)
                         \,\mathrm d\mathbf x
 &\leq r^{-4n}H_n(t)\leq JA(t)\left(\frac1{r^4z(t)}\right)^n.
 \end{aligned}
\]
The last expression tends to zero as $n\to\infty$. Since $r>0$,
\[
 0\leq\int_{\mathbb R^2\setminus\overline B_r(\mathbf0)}\omega(t,\mathbf x)\,\mathrm d\mathbf x
 \leq\frac1{r^2}\int_{\mathbb R^2\setminus\overline B_r(\mathbf0)}|\mathbf x|^2
             \omega(t,\mathbf x)\,\mathrm d\mathbf x=0.
\]
Nonnegativity gives $\omega(t,\cdot)=0$ almost everywhere on
$\{|\mathbf x|>r\}$.
It follows that $R(t)\leq r$ for every $r>z(t)^{-1/4}$. Therefore
\begin{equation}\label{eq:parameter-bound}
 R(t)^4\leq z(t)^{-1}
 =(R_0^4+\delta)\left(1+\frac{Jt}{\pi\delta}\right)^4,
 \qquad t\geq0,\quad \delta>0.
\end{equation}

For $t>0$, choose
\[
 \begin{aligned}
 \delta
 &=\sqrt3\,R_0^2\sqrt{\frac{Jt}{\pi}}+\frac{3Jt}{\pi}=\sqrt{\frac{3Jt}{\pi}}
      \left(R_0^2+\sqrt{\frac{3Jt}{\pi}}\right).
 \end{aligned}
\]
Then we have
\begin{equation*}
    \sqrt{R_0^4+\delta}
 =\sqrt{R_0^4+\sqrt3\,R_0^2\sqrt{\frac{Jt}{\pi}}
                      +\frac{3Jt}{\pi}}\leq R_0^2+\sqrt{\frac{3Jt}{\pi}},
\end{equation*}
and 
\begin{equation*}
    \frac{Jt}{\pi\delta}
 =\frac{\sqrt{Jt/\pi}}
        {\sqrt3\left(R_0^2+\sqrt{3Jt/\pi}\right)}.
\end{equation*}
Taking square roots in \eqref{eq:parameter-bound} and using
$R_0^2+\sqrt{3Jt/\pi}\geq\sqrt{3Jt/\pi}$, we obtain
\[
 \begin{aligned}
 R(t)^2
 &\leq\left(R_0^2+\sqrt{\frac{3Jt}{\pi}}\right)
       \left(1+\frac{\sqrt{Jt/\pi}}
       {\sqrt3\left(R_0^2+\sqrt{3Jt/\pi}\right)}\right)^2\\
 &=R_0^2+\sqrt{\frac{3Jt}{\pi}}
       +\frac2{\sqrt3}\sqrt{\frac{Jt}{\pi}}
       +\frac{Jt}{3\pi\left(R_0^2+\sqrt{3Jt/\pi}\right)}\\
 &\leq R_0^2+\left(\sqrt3+\frac2{\sqrt3}
                       +\frac1{3\sqrt3}\right)\sqrt{\frac{Jt}{\pi}}\\
 &=R_0^2+\frac{16}{\sqrt{27\pi}}\sqrt{Jt}.
 \end{aligned}
\]
This proves \eqref{eq:main} for $t>0$.  Translation restores the original center $\mathbf c$.
Finally, $\operatorname{diam}(\operatorname{supp}\omega(t,\cdot))\leq2R(t)$
proves \eqref{eq:asymptotic}.
\end{proof}

\begin{proof}[Proof of Corollary~\ref{cor:time-holder}]
Without loss of generality, we still assume that $\mathbf{c}=\mathbf 0$.
It suffices to consider $0\leq s<t$. Set $\Delta=t-s$ and, for
$0\leq\sigma\leq\Delta$, define
\[
 H_n^+(\sigma)=H_n(s+\sigma),\qquad
 H_n^-(\sigma)=H_n(t-\sigma).
\]
By Lemma~\ref{lem:conservation}, these functions are nonnegative
and absolutely continuous, with
$H_0^+=H_0^-=J$. Theorem~\ref{thm:main} gives $R(s),R(t)<\infty$,
and hence
\[
 H_n^+(0)\leq J R(s)^{4n},\qquad
 H_n^-(0)\leq J R(t)^{4n},\qquad n\geq1.
\]
Moreover, we have
\[
 (H_n^+)'(\sigma)=H_n'(s+\sigma),\qquad
 (H_n^-)'(\sigma)=-H_n'(t-\sigma).
\]
The absolute-value estimate in \eqref{eq:convolution} therefore implies that
\[
 (H_n^\pm)'(\sigma)
 \leq\frac{2n+1}{\pi}\sum_{j=0}^{n-1}
 H_j^\pm(\sigma)H_{n-1-j}^\pm(\sigma)
\]
for every $n\geq1$ and almost every $\sigma\in[0,\Delta]$.

The finite-sum argument
\eqref{eq:finite-polynomial}--\eqref{eq:moment-bound} uses only these
properties and the initial moment bounds. Applying this argument to $H_n^+$
with $R_0$ replaced by $R(s)$, and to $H_n^-$ with $R_0$ replaced
by $R(t)$, followed by the same tail estimate as in
\eqref{eq:parameter-bound}, yields
\[
 \begin{aligned}
 R(t)^4&\leq(R(s)^4+\delta)
       \left(1+\frac{J\Delta}{\pi\delta}\right)^4,\qquad
 R(s)^4\leq(R(t)^4+\delta)
       \left(1+\frac{J\Delta}{\pi\delta}\right)^4
 \end{aligned},
\]
for $\delta>0$.
In the first inequality choose
$\delta=\sqrt3\,R(s)^2\sqrt{J\Delta/\pi}+3J\Delta/\pi$, and in the
second choose
$\delta=\sqrt3\,R(t)^2\sqrt{J\Delta/\pi}+3J\Delta/\pi$.
The calculation following \eqref{eq:parameter-bound}, with $R_0$
replaced by $R(s)$ and $R(t)$, respectively, gives
\[
 \begin{aligned}
 R(t)^2&\leq R(s)^2+\frac{16}{\sqrt{27\pi}}\sqrt{J\Delta},\qquad
 R(s)^2\leq R(t)^2+\frac{16}{\sqrt{27\pi}}\sqrt{J\Delta}.
 \end{aligned}
\]
Combining these inequalities proves \eqref{eq:time-holder}.
\end{proof}

\section*{Declarations}

\subsection*{Acknowledgement}
D. Cao was supported by the National Key R\&D Program of China (2023YFA1010001) and the National Natural Science Foundation of
China (12371212). J. Fan was supported by the National Key R\&D Program of
China (2022YFA1005602) and the National Natural Science Foundation of
China (12371212). G. Wang was supported by the National Natural Science
Foundation of China (12471101).
The authors acknowledge the use of OpenAI tools for language editing and manuscript preparation, including assistance with the exposition and checking notation. The authors take full responsible for the content of the manuscript, including the all mathematical statements and proofs.

\subsection*{Author contributions}
All authors contributed equally.

\subsection*{Conflict of interest}
The authors declare that they have no conflict of interest.

\subsection*{Data availability}
Data sharing is not applicable to this article because no datasets were
generated or analyzed.

\end{document}